\title{$H^2$-regularity of Steklov eigenfunctions on convex domains via Rellich-Pohozaev identity}
\author{Pier Domenico Lamberti\footnote{Università degli Studi di Padova\,, Dipartimento di Tecnica e Gestione dei Sistemi Industriali (DTG)\,, Stradella S. Nicola 3\,, 36100 Vicenza\,, Italy. Email:\, pierdomenico.lamberti@unipd.it}   and Luigi Provenzano\footnote{Sapienza Universit\`a di Roma\,, Dipartimento di Scienze di Base e Applicate per l'Ingegneria\,, Via Scarpa 16\,, 00161 Roma\,, Italy. Email:\, luigi.provenzano@uniroma1.it}}
\date{\today}
\newtheorem{defi}{Definition} 
\newtheorem{thm}[defi]{Theorem}
\newtheorem{rem}[defi]{Remark}
\newtheorem{lemme}[defi]{Lemma}
\newcommand{\matrice}{\begin{pmatrix}}
\newcommand{\ok}{\end{pmatrix}}
\begin{document}

\maketitle

\noindent
{\bf Abstract.} We prove that the Steklov eigenfunctions on convex domains of $\mathbb R^n$ are $H^2$ regular by adapting a classical argument combined with the Rellich-Pohozaev identity.
\vspace{11pt}

\noindent
{\bf Keywords:} Steklov eigenfunctions, convex domain, $H^2$-regularity.

\vspace{6pt}
\noindent
{\bf 2020 Mathematics Subject Classification:} 35B65, 35P15, 35P05.

\section{Introduction}
Let $\Omega$ be a bounded convex open subset of $\mathbb R^n$, briefly a bounded convex domain.
In this note we study the $H^2$-regularity of the solutions $u\in H^1(\Omega)$ of the following variational problem: 
\begin{equation}\label{weak_steklov}
\int_{\Omega}\nabla u\cdot\nabla\phi=\sigma\int_{\partial\Omega}u\phi\,,\ \ \ \forall\phi\in H^1(\Omega).
\end{equation}
Here the unknown is the couple $(u,\sigma)\in H^1(\Omega)\times\mathbb R$, where $u$ is the eigenfunction and $\sigma$ is the corresponding eigenvalue. In this paper  it is always understood that $n\geq 2$ and  $H^m(\Omega)$ denotes the usual Sobolev space of functions in $L^2(\Omega)$ with weak derivatives up to order $m$ in $L^2(\Omega)$.

Since $\Omega$ is bounded and convex, it is Lipschitz regular (in the sense that locally it can be expressed as a subgraph of a Lipschitz function). Hence there exists a sequence
\begin{equation}\label{spectrum}
0=\sigma_1<\sigma_2\leq\cdots\leq\sigma_j\leq\cdots\nearrow+\infty
\end{equation}
of eigenvalues, with corresponding eigenfunctions $\{u_j\}_{j=1}^{\infty}$ which can be chosen in such a way that their traces form an orthonormal basis of $L^2(\partial\Omega)$. If the domain is smooth, weak solutions $u\in H^1(\Omega)$ of \eqref{weak_steklov} 
are regular up to the boundary of $\Omega$
 and solve the   problem
\begin{equation}\label{steklov_class}
\begin{cases}
\Delta u=0\,, &  {\rm in\ }\Omega\,,\\
\partial_{\nu}u=\sigma u\,, & {\rm on\ }\partial\Omega,
\end{cases}
\end{equation}
in the classical sense, where $\nu$ denotes the unit outer normal to $\partial\Omega$ and $\partial_{\nu}u$ the normal derivative of $u$. Problem \eqref{steklov_class} is known as the {\it Steklov problem} \cite{steklov}. See the surveys \cite{GirouSurv,Pol17} for more information on this problem and its relation with the so-called Dirichlet-to-Neumann map.

\medskip

The purpose of this note is to to prove the following

\begin{thm}\label{mainthm}
Let $\Omega$ be a bounded convex domain in $\mathbb R^n$ and let $u\in H^1(\Omega)$ be a solution of \eqref{weak_steklov}. Then $u\in H^2(\Omega)$.
\end{thm}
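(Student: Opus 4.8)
The plan is to combine a Rellich--Pohozaev a priori estimate, valid for Steklov eigenfunctions on \emph{smooth} bounded convex domains, with an approximation of a general bounded convex domain by smooth ones. Testing \eqref{weak_steklov} against $\phi\in C^\infty_c(\Omega)$ shows that $u$ is weakly harmonic, hence (Weyl's lemma) smooth in the interior of $\Omega$, so the whole issue is the behaviour near $\partial\Omega$. The first step is thus: assume $\Omega$ is smooth, so that $u\in C^\infty(\overline\Omega)$ and \eqref{steklov_class} holds classically, and prove the a priori bound $\|D^2u\|_{L^2(\Omega)}^2\le C\,\|u\|_{L^2(\partial\Omega)}^2$ with $C$ depending only on $n$, on the eigenvalue $\sigma$, and on the inradius and diameter of $\Omega$ --- quantities that are stable under Hausdorff convergence of convex bodies.

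To obtain this estimate I would start from the identity, valid for any harmonic $u$ smooth up to the boundary and obtained by integrating by parts twice in $\int_\Omega|D^2u|^2$,
\[
\int_\Omega|D^2u|^2\,dx=\frac12\int_{\partial\Omega}\partial_\nu|\nabla u|^2\,dS.
\]
On $\partial\Omega$ I split $\nabla u=\nabla_{\partial\Omega}u+(\partial_\nu u)\nu$ and use the classical decomposition $\Delta u=\partial^2_{\nu\nu}u+H\,\partial_\nu u+\Delta_{\partial\Omega}u$ ($H$ the mean curvature, i.e. the sum of the principal curvatures), the Steklov condition $\partial_\nu u=\sigma u$, and an integration by parts on the closed manifold $\partial\Omega$, to rewrite the right-hand side as
\[
\int_\Omega|D^2u|^2\,dx=2\sigma\int_{\partial\Omega}|\nabla_{\partial\Omega}u|^2\,dS-\sigma^2\int_{\partial\Omega}H\,u^2\,dS-\int_{\partial\Omega}\mathrm{II}(\nabla_{\partial\Omega}u,\nabla_{\partial\Omega}u)\,dS,
\]
$\mathrm{II}$ being the second fundamental form of $\partial\Omega$. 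Here convexity enters decisively: $H\ge0$ and $\mathrm{II}\ge0$, so the last two terms are nonpositive and $\int_\Omega|D^2u|^2\,dx\le 2\sigma\int_{\partial\Omega}|\nabla u|^2\,dS$. To control $\int_{\partial\Omega}|\nabla u|^2\,dS$ I fix a point $x_0$ in $\Omega$ realizing the inradius and invoke the Rellich--Pohozaev identity for the harmonic function $u$ with the vector field $x-x_0$,
\[
\int_{\partial\Omega}((x-x_0)\cdot\nu)\,|\nabla u|^2\,dS=(n-2)\int_\Omega|\nabla u|^2\,dx+2\int_{\partial\Omega}((x-x_0)\cdot\nabla u)\,\partial_\nu u\,dS.
\]
Convexity again gives $(x-x_0)\cdot\nu\ge\mathrm{inrad}(\Omega)>0$ on $\partial\Omega$; estimating the last integral by Young's inequality (using $\partial_\nu u=\sigma u$ and $|x-x_0|\le\mathrm{diam}(\Omega)$), absorbing the resulting $\int_{\partial\Omega}|\nabla u|^2\,dS$ term, and recalling that \eqref{weak_steklov} with $\phi=u$ reads $\int_\Omega|\nabla u|^2\,dx=\sigma\int_{\partial\Omega}u^2\,dS$, one gets $\int_{\partial\Omega}|\nabla u|^2\,dS\le C(n,\sigma,\mathrm{inrad}(\Omega),\mathrm{diam}(\Omega))\int_{\partial\Omega}u^2\,dS$. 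Combined with a Poincaré-type inequality bounding $\|u\|_{L^2(\Omega)}$ by $\|\nabla u\|_{L^2(\Omega)}+\|u\|_{L^2(\partial\Omega)}$, this yields the a priori bound on $\|u\|_{H^2(\Omega)}$.

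For a general bounded convex $\Omega$ I would then pick smooth bounded convex domains $\Omega_m\subset\Omega$ with $\Omega_m\uparrow\Omega$ (a standard approximation of convex bodies by smooth ones). By the stability of the Steklov spectrum under such convergence one selects Steklov eigenfunctions $u_m$ on $\Omega_m$, normalized in $L^2(\partial\Omega_m)$ and with eigenvalue converging to $\sigma$; since the inradii and diameters of the $\Omega_m$ converge, the a priori estimate gives $\|u_m\|_{H^2(\Omega_m)}\le C$ uniformly in $m$, so on every compact subset of $\Omega$ the $u_m$ are bounded in $H^2$, a subsequence converges in $H^2_{\mathrm{loc}}(\Omega)$, and by lower semicontinuity the limit lies in $H^2(\Omega)$ and is a Steklov eigenfunction of $\Omega$ with eigenvalue $\sigma$. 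The step I expect to be the main obstacle is ensuring that the \emph{given} eigenfunction $u$ --- not merely some eigenfunction of eigenvalue $\sigma$ --- is reached in the limit; this is delicate when $\sigma$ is not simple and is further complicated by the fact that the $\Omega_m$ have boundaries different from $\partial\Omega$, so eigenfunctions of $\Omega$ cannot simply be restricted. One way around it is to work with the spectral projections onto a small interval around $\sigma$ together with the convergence of the associated Dirichlet-to-Neumann forms; an alternative is to bypass the approximation altogether by first establishing $\nabla u\in L^2(\partial\Omega)$ and then carrying out the Rellich--Pohozaev computation on $\Omega$ itself, with the curvature terms interpreted as the nonnegative measures provided by Alexandrov's theorem.
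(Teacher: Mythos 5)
Your smooth-domain estimate is, up to cosmetic differences, the paper's Lemma~\ref{hessian_smooth}: your identity $\int_\Omega|D^2u|^2=\tfrac12\int_{\partial\Omega}\partial_\nu|\nabla u|^2$ together with the splitting of $\Delta u$ on the boundary is exactly Reilly's formula for a harmonic function, and your Rellich--Pohozaev step centered at the incenter, with $(x-x_0)\cdot\nu\ge\rho$ from convexity, is the paper's; using Young's inequality and absorption instead of solving a quadratic inequality in $\bigl(\int_{\partial\Omega}|\nabla_{\partial\Omega}u|^2\bigr)^{1/2}$ is a harmless variant and yields the same kind of constant depending only on $n,\sigma,D,\rho$. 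The approximation-by-smooth-convex-domains scheme with interior elliptic regularity and stability of $D$ and $\rho$ is also the paper's strategy.

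The genuine gap is the point you yourself flag and then leave open: how to conclude for the \emph{given} eigenfunction $u$ when $\sigma$ is multiple. The paper closes this without ever trying to approximate $u$ itself. For each approximating domain it takes a full sequence of eigenfunctions $u_j(k)$ whose traces form an orthonormal basis of $L^2(\partial\Omega_k)$; spectral stability (Theorem~\ref{stability_easy}, or \cite{bucgiatre,ferr_lamb}) gives, along a subsequence, limit eigenfunctions $u_j$ on $\Omega$ with $\sigma_j(k)\to\sigma_j$ and with traces forming an orthonormal basis of $L^2(\partial\Omega)$; each $u_j$ is shown to lie in $H^2(\Omega)$ by your local argument plus monotone convergence over an exhaustion; and finally any solution $u$ of \eqref{weak_steklov} with eigenvalue $\sigma$ is a \emph{finite} linear combination of those $u_j$ with $\sigma_j=\sigma$ (expand the trace of $u$ in the orthonormal basis of traces: only indices with $\sigma_j=\sigma$ contribute, and the difference between $u$ and the corresponding combination is harmonic with zero trace, hence zero), so $u\in H^2(\Omega)$. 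Your two proposed remedies are not carried out: the spectral-projection/Dirichlet-to-Neumann route amounts to re-proving this completeness argument, and the alternative of running Rellich--Pohozaev directly on the nonsmooth $\Omega$ with Alexandrov curvature measures would first require justifying that $\nabla u$ has an $L^2(\partial\Omega)$ trace and that the boundary integrations by parts survive on a merely convex boundary, which is a substantial separate task. A secondary, fixable point: you approximate from the inside ($\Omega_m\uparrow\Omega$), whereas the paper approximates from the outside precisely so that eigenfunctions of $\Omega_k$ restrict to $\Omega$ and converge in $H^1(\Omega)$ without needing uniformly bounded extension operators from $\Omega_m$ to $\Omega$; with inner approximation that extension step (or an appeal to the general stability theorems under the correct hypotheses, e.g. convergence of the perimeters) has to be supplied.
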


Theorem \ref{mainthm} is known to hold for other classical eigenvalue problems, namely, for the Dirichlet, Neumann and Robin (with a positive parameter) problems. A proof of Theorem \ref{mainthm} in these cases can be found in the classical book of Grisvard \cite[Theorem 3.1.2.3]{grisvard}.  The strategy of the proof in \cite[\S 3]{grisvard} is the following. One starts from the case of bounded smooth convex domains, where the eigenfunctions are known to be in  $H^2(\Omega)$ by elliptic regularity theory.  The $L^2$-norms of the Hessians of the eigenfunctions are  then estimated by means of explicit quantities.  
Note that in the above mentioned cases these estimates are obtained by a straightforward application of the Reilly's Formula.
Then, any given convex domain is approximated by a sequence of smooth convex domains, and the estimates on the Hessians ``pass to the limit''. 

\medskip Unfortunately, the arguments used in \cite[\S 3]{grisvard} do not apply directly to the Stkelov case, which could be seen as a Robin problem with a negative parameter. 
In this note we explain how to adapt the arguments of \cite{grisvard} to the Steklov problem. In this case, the Reilly's formula has to be accompanied by a Rellich-Pohozaev identity which allows to estimate a boundary integral. In particular, this enables us to estimate the $L^2$-norm of the Hessian of a normalized eigenfunction on a bounded smooth convex domain by means of geometric quantities -the diameter and the inradius of the domain- that are stable under the approximation procedure. Another important ingredient of our proof is the spectral stability of the Steklov problem. For this purpose, we use the recent results of \cite{bucgiatre, ferr_lamb}.
\medskip

We cannot exclude that a neat proof of the $H^2$-regularity of the Steklov eigenfunctions on convex domains
is already present in the literature but we have not been able to find it. 
\medskip

This paper is organized as follows. In Section~\ref{sec:estimates} we provide the estimates for the Hessian on smooth convex domains. In Section~\ref{approximation}
we carry out the approximation procedure and prove Theorem~\ref{mainthm}. In Section~\ref{app:A} we briefly discuss the cases of Dirichlet, Neumann and Robin boundary conditions in order to compare them with the Steklov
problem. In Appendix~\ref{app:proof} we provide a self-contained proof of a spectral stability result for  the Steklov problem adapted to the specific case under consideration.

\section{$L^2$-estimates of the Hessian on smooth convex domains}\label{sec:estimates}

In this section we establish a bound on the $L^2$-norm of the Hessian $D^2u$ of an eigenfunction $u$ of \eqref{weak_steklov} on a bounded smooth convex domain $\Omega$ which depends only on $n$, the diameter and the inradius of $\Omega$. Recall that the diameter $D$ of $\Omega$  is defined as
$$
D=\sup_{x,y\in\Omega}|x-y|
$$
and the inradius of $\Omega$ is defined as
$$
\rho=\sup_{x\in\Omega}\inf_{y\in\partial\Omega}|x-y|.
$$

Our estimates are based on two fundamental identities that we now recall: the Rellich-Pohozaev identity \cite{pohozaev,rellich} and the Reilly's formula \cite{Reilly}. We refer e.g. to \cite[Lemma~3.1]{PS_steklov} for a proof of the Rellich-Pohozaev identity used in this paper.

\begin{thm}[Rellich-Poho\v{z}aev identity]\label{rellich}
Let $\Omega$ be a bounded smooth domain in $\mathbb R^n$ and let $u\in H^2(\Omega)$ be such that $\Delta u=0$ in $L^2(\Omega)$. Then
\begin{equation}\label{rellich_id}
\int_{\partial\Omega}\partial_{\nu}u\,x\cdot\nabla u-\frac{1}{2}\int_{\partial\Omega}|\nabla u|^2\,x\cdot\nu+\frac{n-2}{2}\int_{\Omega}|\nabla u|^2=0,
\end{equation}
where $x$ denotes the position vector.
\end{thm}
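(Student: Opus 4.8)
The plan is to prove the Rellich-Pohožaev identity \eqref{rellich_id} by the standard divergence-theorem computation. First I would observe that since $u \in H^2(\Omega)$ and $\Omega$ is bounded and smooth, all the integrals involved make sense: $\nabla u \in H^1(\Omega)$ has a well-defined trace in $L^2(\partial\Omega)$, so both boundary integrals are finite, and the domain integral is clearly finite. I would then introduce the vector field
\begin{equation*}
V = (x\cdot\nabla u)\,\nabla u - \frac{1}{2}|\nabla u|^2\,x,
\end{equation*}
which lies in $W^{1,1}(\Omega)^n$ because it is a sum of products of $H^1$ functions with bounded ($x$) or $L^2$ ($\nabla u$) factors, so the divergence theorem applies to it.

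Next I would compute $\operatorname{div} V$ pointwise (a.e.). Using $\operatorname{div}((x\cdot\nabla u)\nabla u) = \nabla(x\cdot\nabla u)\cdot\nabla u + (x\cdot\nabla u)\Delta u$ and expanding $\nabla(x\cdot\nabla u) = \nabla u + (D^2u)\,x$ (here $D^2u$ is the Hessian, which is in $L^2$ since $u \in H^2$), the first term becomes $|\nabla u|^2 + x\cdot(D^2u\,\nabla u) + (x\cdot\nabla u)\Delta u$. For the second term, $\operatorname{div}(|\nabla u|^2 x) = \nabla(|\nabla u|^2)\cdot x + n|\nabla u|^2 = 2\,x\cdot(D^2u\,\nabla u) + n|\nabla u|^2$. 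Combining, the Hessian terms $x\cdot(D^2 u\,\nabla u)$ cancel, and since $\Delta u = 0$ in $L^2(\Omega)$ the term $(x\cdot\nabla u)\Delta u$ vanishes, leaving
\begin{equation*}
\operatorname{div} V = |\nabla u|^2 + (x\cdot\nabla u)\Delta u - \frac{n}{2}|\nabla u|^2 = \frac{2-n}{2}|\nabla u|^2 \quad \text{a.e. in } \Omega.
\end{equation*}
Then I would integrate over $\Omega$ and apply the divergence theorem: $\int_\Omega \operatorname{div} V = \int_{\partial\Omega} V\cdot\nu$, where $V\cdot\nu = (x\cdot\nabla u)(\nabla u\cdot\nu) - \frac12|\nabla u|^2(x\cdot\nu) = \partial_\nu u\,(x\cdot\nabla u) - \frac12|\nabla u|^2\,x\cdot\nu$. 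Rearranging gives exactly \eqref{rellich_id}.

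The main obstacle, and the only genuinely delicate point, is the justification of the divergence theorem and of the product/chain rules at the regularity threshold $u \in H^2(\Omega)$ rather than $u \in C^2(\overline\Omega)$: the identities $\operatorname{div}((x\cdot\nabla u)\nabla u) = \dots$ and $\nabla(x\cdot\nabla u) = \nabla u + D^2u\,x$ must be read as identities between $L^1$, resp. $L^2$, functions, and the Gauss-Green formula must be applied to a merely $W^{1,1}$ field on a smooth domain. The clean way to handle this is by approximation: take $u_k \in C^\infty(\overline\Omega)$ with $u_k \to u$ in $H^2(\Omega)$ (possible since $\Omega$ is smooth, hence a $W^{2,2}$-extension domain), prove the identity with $u_k$ in the integrand of every term except that one replaces $\Delta u$ by $\Delta u_k$ (which need not vanish), obtaining $\frac{2-n}{2}\int_\Omega|\nabla u_k|^2 + \int_\Omega (x\cdot\nabla u_k)\Delta u_k = \int_{\partial\Omega}\partial_\nu u_k\,(x\cdot\nabla u_k) - \frac12\int_{\partial\Omega}|\nabla u_k|^2\,x\cdot\nu$, and then pass to the limit. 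The domain integrals converge because $\nabla u_k \to \nabla u$ in $L^2$, $D^2 u_k \to D^2u$ in $L^2$, $x$ is bounded on $\overline\Omega$, and in particular $\int_\Omega(x\cdot\nabla u_k)\Delta u_k \to \int_\Omega(x\cdot\nabla u)\Delta u = 0$; the boundary integrals converge because the trace operator $H^1(\Omega) \to L^2(\partial\Omega)$ is continuous, so $\nabla u_k|_{\partial\Omega} \to \nabla u|_{\partial\Omega}$ in $L^2(\partial\Omega)$ and $x\cdot\nu$ is bounded. Since the reference \cite[Lemma~3.1]{PS_steklov} is cited for precisely this identity, I would in fact simply invoke it and only sketch the above computation, flagging the approximation argument as the mechanism that upgrades the smooth case.
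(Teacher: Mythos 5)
Your proof is correct and complete; note that the paper itself does not prove Theorem~\ref{rellich} at all, but simply cites \cite[Lemma~3.1]{PS_steklov}, so your argument supplies exactly what the paper outsources. The computation is the standard Rellich--Poho\v{z}aev argument: with $V=(x\cdot\nabla u)\nabla u-\tfrac12|\nabla u|^2x$ one gets $\operatorname{div}V=\tfrac{2-n}{2}|\nabla u|^2+(x\cdot\nabla u)\Delta u$, and integrating yields \eqref{rellich_id}; your algebra (the cancellation of the terms $x\cdot(D^2u\,\nabla u)$) and the final rearrangement of signs are right. You also correctly identify the only delicate point, namely the regularity threshold $u\in H^2(\Omega)$, and your resolution by density of $C^\infty(\overline\Omega)$ in $H^2(\Omega)$ on a smooth domain is sound: the interior terms pass to the limit by $L^2$-convergence of $\nabla u_k$, $D^2u_k$ and boundedness of $x$, the term $\int_\Omega(x\cdot\nabla u_k)\Delta u_k$ tends to $\int_\Omega(x\cdot\nabla u)\Delta u=0$, and the boundary terms pass to the limit because $\nabla u_k\to\nabla u$ in $H^1(\Omega)^n$ forces convergence of the traces in $L^2(\partial\Omega)^n$, hence $L^1(\partial\Omega)$-convergence of the quadratic integrands against the bounded factors $x\cdot\nu$ and $x$. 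As a small remark, the approximation step could be bypassed by applying the Gauss--Green formula directly to your field $V\in W^{1,1}(\Omega)^n$ (the trace operator $W^{1,1}(\Omega)\to L^1(\partial\Omega)$ is bounded on a smooth, or even Lipschitz, domain, and one must check that the trace of $V\cdot\nu$ is the product of the traces), but your limiting argument handles this identification automatically and is the cleaner route at this level of regularity.
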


\begin{thm}[Reilly's formula]
Let $\Omega$ be a bounded smooth domain in $\mathbb R^n$ and let $u\in H^2(\Omega)$. Then
\begin{equation}\label{reilly_for}
\int_{\Omega}|D^2u|^2=\int_{\Omega}(\Delta u)^2-\int_{\partial\Omega}\left[(n-1)\mathcal H(\partial_{\nu}u)^2+2\Delta_{\partial\Omega}u\partial_{\nu}u+II(\nabla_{\partial\Omega}u,\nabla_{\partial\Omega}u)\right],
\end{equation}
where $\mathcal H$ is the mean curvature of the boundary, $II$ is the second fundamental form of the boundary, $\Delta_{\partial\Omega}$ and $\nabla_{\partial\Omega}$ are the boundary Laplacian and gradient, respectively, and $|D^2u|^2=\sum_{i,j=1}^n(\partial^2_{ij}u)^2$.
\end{thm}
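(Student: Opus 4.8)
\medskip\noindent\emph{Sketch of the proof.}
The plan is to deduce \eqref{reilly_for} from the Bochner identity together with two applications of the divergence theorem, and then to rewrite the resulting boundary term in terms of the boundary quantities $\mathcal H$, $II$, $\Delta_{\partial\Omega}$ and $\nabla_{\partial\Omega}$. (Formula \eqref{reilly_for} is the flat, boundary‑value version of the identity of \cite{Reilly}; in $\mathbb R^n$ the Ricci curvature term in the general Reilly/Bochner identity vanishes.) I would first establish it for $u\in C^\infty(\overline\Omega)$ — which is all that is needed in this paper, since Reilly's formula will only be applied to smooth eigenfunctions on smooth domains — and then recover the general case $u\in H^2(\Omega)$ by density at the end.

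First I would use the pointwise Bochner identity in $\mathbb R^n$, which is the elementary computation $\frac12\Delta|\nabla u|^2=\frac12\sum_i\partial_i\partial_i\big(\sum_j(\partial_j u)^2\big)=|D^2u|^2+\nabla u\cdot\nabla(\Delta u)$. Integrating over $\Omega$ and applying the divergence theorem twice — on the left-hand side, $\frac12\int_\Omega\Delta|\nabla u|^2=\frac12\int_{\partial\Omega}\partial_\nu|\nabla u|^2=\int_{\partial\Omega}D^2u(\nu,\nabla u)$, and in the last term, $\int_\Omega\nabla u\cdot\nabla(\Delta u)=\int_{\partial\Omega}(\partial_\nu u)\Delta u-\int_\Omega(\Delta u)^2$ — one arrives at
\begin{equation*}
\int_\Omega|D^2u|^2=\int_\Omega(\Delta u)^2+\int_{\partial\Omega}\big[D^2u(\nu,\nabla u)-(\partial_\nu u)\,\Delta u\big].
\end{equation*}

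The core of the argument is to rewrite the boundary integrand. On $\partial\Omega$ I would split $\nabla u=\nabla_{\partial\Omega}u+(\partial_\nu u)\nu$ and invoke two standard facts: the identity $D^2u(\nu,X)=X(\partial_\nu u)-II(\nabla_{\partial\Omega}u,X)$ for every $X$ tangent to $\partial\Omega$ (which follows by differentiating the relation $\nabla u\cdot\nu=\partial_\nu u$ along $X$ and using $\nabla_{\partial\Omega}u\cdot\nu=0$), and the splitting of the full Laplacian at the boundary $\Delta u=D^2u(\nu,\nu)+(n-1)\mathcal H\,\partial_\nu u+\Delta_{\partial\Omega}u$. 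Taking $X=\nabla_{\partial\Omega}u$ and subtracting, the term $D^2u(\nu,\nu)\,\partial_\nu u$ cancels and one gets $D^2u(\nu,\nabla u)-(\partial_\nu u)\Delta u=\nabla_{\partial\Omega}(\partial_\nu u)\cdot\nabla_{\partial\Omega}u-II(\nabla_{\partial\Omega}u,\nabla_{\partial\Omega}u)-(n-1)\mathcal H(\partial_\nu u)^2-(\Delta_{\partial\Omega}u)\,\partial_\nu u$. Integrating over the closed manifold $\partial\Omega$ and integrating by parts in the first term, $\int_{\partial\Omega}\nabla_{\partial\Omega}(\partial_\nu u)\cdot\nabla_{\partial\Omega}u=-\int_{\partial\Omega}(\Delta_{\partial\Omega}u)\,\partial_\nu u$, so the boundary integral above equals $-\int_{\partial\Omega}\big[(n-1)\mathcal H(\partial_\nu u)^2+2\Delta_{\partial\Omega}u\,\partial_\nu u+II(\nabla_{\partial\Omega}u,\nabla_{\partial\Omega}u)\big]$, which gives \eqref{reilly_for}.

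Finally, for general $u\in H^2(\Omega)$ I would approximate $u$ in $H^2(\Omega)$ by functions in $C^\infty(\overline\Omega)$ and pass to the limit, noting that the bulk terms are continuous for the $H^2$ topology, that the traces of $\nabla u$ and $\nabla_{\partial\Omega}u$ depend continuously on $u\in H^2(\Omega)$ with values in $L^2(\partial\Omega)$, and that $\int_{\partial\Omega}\Delta_{\partial\Omega}u\,\partial_\nu u$ is to be read as the $H^{-1/2}(\partial\Omega)$–$H^{1/2}(\partial\Omega)$ duality pairing, again continuous under $H^2(\Omega)$ convergence. The only delicate points I foresee are fixing the sign conventions for $\mathcal H$ and $II$ consistently (so that both are nonnegative on convex $\Omega$, as will be needed in Section~\ref{sec:estimates}) and the precise functional framework for the term $\int_{\partial\Omega}\Delta_{\partial\Omega}u\,\partial_\nu u$ in the merely $H^2$ case; the identity itself is a direct computation with no substantial obstacle.
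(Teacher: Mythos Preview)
Your argument is correct. The paper does not actually give a proof of Reilly's formula but simply remarks that it follows from \cite[Theorem~3.1.1.1]{grisvard} by taking $v=\nabla u$ (with the appropriate sign convention for the second fundamental form); your direct derivation via the Bochner identity and the boundary splitting of $\Delta u$ is precisely the computation underlying that reference, so the approaches coincide.
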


We note that Reilly's formula can be immediately obtained by setting $v=\nabla u$ in Theorem~3.1.1.1 in \cite{grisvard} (paying attention to the sign convention  for the second fundamental form). 

We are now ready to prove  the following
\begin{lemme}\label{hessian_smooth}
Let $\Omega$ be a bounded smooth convex domain in $\mathbb R^n$ with diameter $D$ and inradius $\rho$. Let $u$ be an eigenfunction  of \eqref{weak_steklov} corresponding to the  eigenvalue $\sigma$, normalized by $\int_{\partial\Omega}u^2=1$. Then
\begin{equation}\label{hessian_smooth_formula}
\int_{\Omega}|D^2u|^2\leq 2\sigma C_{D,\rho,\sigma}
\end{equation}
where 
\begin{equation}\label{const}
C_{D,\rho,\sigma}:=\left(D\sigma+\sqrt{D^2\sigma^2+\rho\sigma\left(D\sigma+n-2\right)}\right)^2\rho^{-2}.
\end{equation}
\end{lemme}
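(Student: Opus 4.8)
The plan is to combine the two identities recalled above. Reilly's formula \eqref{reilly_for} provides an upper bound for $\int_{\Omega}|D^2u|^2$ in terms of boundary integrals which, by convexity and the Steklov condition, collapse to a single term; the Rellich-Pohozaev identity \eqref{rellich_id} is then used to control that term by $D$, $\rho$ and $\sigma$. Throughout we may assume $\sigma>0$, the case $\sigma=0$ being trivial since then $u$ is constant and $D^2u\equiv 0$. As $\Omega$ is smooth, $u$ is smooth up to $\partial\Omega$ (as recalled in the Introduction), so in particular $u\in H^2(\Omega)$ and both identities apply. We shall repeatedly use the boundary condition $\partial_{\nu}u=\sigma u$ on $\partial\Omega$ and, testing \eqref{weak_steklov} with $\phi=u$, the identity $\int_{\Omega}|\nabla u|^2=\sigma\int_{\partial\Omega}u^2=\sigma$.

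First, I would specialize \eqref{reilly_for} to the harmonic function $u$. By convexity $\mathcal H\geq 0$ and $II\geq 0$ (with the sign convention of \eqref{reilly_for}), so the terms $(n-1)\mathcal H(\partial_{\nu}u)^2$ and $II(\nabla_{\partial\Omega}u,\nabla_{\partial\Omega}u)$, both entering with a minus sign, may be dropped. For the remaining term, $\partial_{\nu}u=\sigma u$ together with integration by parts on the closed manifold $\partial\Omega$ gives $\int_{\partial\Omega}\Delta_{\partial\Omega}u\,\partial_{\nu}u=\sigma\int_{\partial\Omega}u\,\Delta_{\partial\Omega}u=-\sigma\int_{\partial\Omega}|\nabla_{\partial\Omega}u|^2$. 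Hence
\[
\int_{\Omega}|D^2u|^2\leq 2\sigma\int_{\partial\Omega}|\nabla_{\partial\Omega}u|^2=:2\sigma B ,
\]
and it remains only to bound $B$ by $C_{D,\rho,\sigma}$.

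For this, I would place the origin at a point realizing the inradius, so that $B_{\rho}(0)\subset\Omega$; convexity then forces $x\cdot\nu\geq\rho$ at every point of $\partial\Omega$, while $|x|\leq D$ on $\overline{\Omega}$. Decomposing $\nabla u=\nabla_{\partial\Omega}u+(\partial_{\nu}u)\nu$ on $\partial\Omega$, so that $x\cdot\nabla u=x\cdot\nabla_{\partial\Omega}u+(x\cdot\nu)\partial_{\nu}u$ and $|\nabla u|^2=|\nabla_{\partial\Omega}u|^2+(\partial_{\nu}u)^2$, identity \eqref{rellich_id} rearranges into
\[
\frac{1}{2}\int_{\partial\Omega}(x\cdot\nu)|\nabla_{\partial\Omega}u|^2=\int_{\partial\Omega}\partial_{\nu}u\,(x\cdot\nabla_{\partial\Omega}u)+\frac{1}{2}\int_{\partial\Omega}(x\cdot\nu)(\partial_{\nu}u)^2+\frac{n-2}{2}\int_{\Omega}|\nabla u|^2 .
\]
Bounding the left-hand side below by $\frac{\rho}{2}B$, using Cauchy-Schwarz together with $\partial_{\nu}u=\sigma u$ and $\int_{\partial\Omega}u^2=1$ to get $\int_{\partial\Omega}\partial_{\nu}u\,(x\cdot\nabla_{\partial\Omega}u)\leq D\sigma B^{1/2}$, bounding $\int_{\partial\Omega}(x\cdot\nu)(\partial_{\nu}u)^2\leq D\sigma^2$, and recalling $\int_{\Omega}|\nabla u|^2=\sigma$, one arrives at
\[
\rho B-2D\sigma B^{1/2}-\sigma(D\sigma+n-2)\leq 0 ,
\]
a quadratic inequality in the nonnegative unknown $B^{1/2}$. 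Since $\rho>0$ and (for $\sigma>0$, $n\geq 2$) the constant term is negative, $B^{1/2}$ cannot exceed the larger root, which gives exactly $B\leq\rho^{-2}\bigl(D\sigma+\sqrt{D^2\sigma^2+\rho\sigma(D\sigma+n-2)}\bigr)^2=C_{D,\rho,\sigma}$. Combined with the previous display this yields \eqref{hessian_smooth_formula}.

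The computation is short; the points requiring care are the signs of the curvature terms in Reilly's formula — nonnegative by convexity, once the orientation convention for $II$ is fixed — and the decomposition of $\nabla u$ into tangential and normal parts \emph{before} applying \eqref{rellich_id}. The latter is what makes the constant come out exactly as $C_{D,\rho,\sigma}$ and, more importantly, guarantees that the bound depends on $\Omega$ only through $D$ and $\rho$, which is what will allow it to survive the smooth approximation of Section~\ref{approximation}.
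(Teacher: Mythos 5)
Your proof is correct and follows essentially the same route as the paper: Reilly's formula with the convexity signs reduces the problem to bounding $2\sigma\int_{\partial\Omega}|\nabla_{\partial\Omega}u|^2$, and the Rellich--Pohozaev identity, after splitting $\nabla u$ into tangential and normal parts and using $\rho\leq x\cdot\nu$, $|x|\leq D$, yields the same quadratic inequality in $\bigl(\int_{\partial\Omega}|\nabla_{\partial\Omega}u|^2\bigr)^{1/2}$ and hence the constant $C_{D,\rho,\sigma}$. The only differences are cosmetic (you apply Reilly before Rellich, and you quote the fact $x\cdot\nu\geq\rho$ — which the paper isolates as a separate lemma — without proof), so no gaps.
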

\begin{proof}
Since $\Omega$ is smooth, elliptic regularity implies that $u\in H^2(\Omega)$, see e.g. \cite[Chapter~2]{grisvard}. As done in \cite{PS_steklov} in another context, we begin by  using the Rellich-Pohozaev identity to show that the $L^2$-norm on $\partial\Omega$ of the normal derivative and of the tangential part of the gradient of $u$ can be estimated one in terms of the other by constants depending only on $n,\sigma,D,\rho$. Let $B_{\rho}$ be a ball of radius $\rho$ contained in $\Omega$. Up to a translation, we may assume that $0\in\Omega$ and  that this ball is centered at the origin. 
We observe that $\int_{\Omega}|\nabla u|^2=\sigma\int_{\partial\Omega}u^2=\sigma$ and that $\partial_{\nu}u=\sigma u$ on $\partial\Omega$. This last identity is understood in the sense of traces, which are well-defined in $L^2(\partial\Omega)$ since $u\in H^2(\Omega)$. After simplifications, we get from \eqref{rellich_id}
$$
\int_{\partial\Omega}|\nabla_{\partial\Omega}u|^2x\cdot\nu-2\int_{\partial\Omega}\partial_{\nu}u\nabla_{\partial\Omega}u\cdot x-\int_{\partial\Omega}(\partial_{\nu}u)^2x\cdot\nu-\sigma(n-2)=0.
$$
Now, $|x|\leq D$ and $\rho\leq x\cdot\nu \leq D$. Note that the inequality $x\cdot\nu\geq\rho$ follows from the convexity of $\Omega$, see Lemma \ref{lem_inradius} below. 
 Hence we have
$$
\int_{\partial\Omega}|\nabla_{\partial\Omega}u|^2-\frac{2D}{\rho}\left(\int_{\partial\Omega}|\nabla_{\partial\Omega}u|^2\right)^{1/2}\left(\int_{\partial\Omega}(\partial_{\nu}u))^2\right)^{1/2}-\frac{D}{\rho}\int_{\partial\Omega}(\partial_{\nu}u)^2-\frac{\sigma(n-2)}{\rho}\leq 0
$$
which reads
$$
\int_{\partial\Omega}|\nabla_{\partial\Omega}u|^2-\frac{2D\sigma}{\rho}\left(\int_{\partial\Omega}|\nabla_{\partial\Omega}u|^2\right)^{1/2}-\frac{\sigma}{\rho}\left(D\sigma+n-2\right)\leq 0.
$$
This is a second order equation in $\left(\int_{\partial\Omega}|\nabla_{\partial\Omega}u|^2\right)^{1/2}$ which implies the upper bound
\begin{equation}\label{est_tang_grad}
\int_{\partial\Omega}|\nabla_{\partial\Omega} u|^2\leq C_{D,\rho,\sigma}.
\end{equation}

Now we use this estimate to bound the $L^2$-norm of the Hessian of $u$. To do so we use Reilly's formula \eqref{reilly_for}, which in this case gives
\begin{equation}\label{reilly_appl}
\int_{\Omega}|D^2u|^2=-\int_{\partial\Omega}[\sigma^2(n-1)\mathcal H u^2+2\sigma\Delta_{\partial\Omega}uu+II(\nabla_{\partial\Omega}u,\nabla_{\partial\Omega}u)]\leq-2\sigma\int_{\partial\Omega}\Delta_{\partial\Omega}uu=2\sigma\int_{\partial\Omega}|\nabla_{\partial\Omega}u|^2
\end{equation}
Here we have used the convexity and the smoothness of the domain, namely $\mathcal H\geq 0$ on $\partial\Omega$ and $II$ is a non-negative quadratic form on the tangent spaces to $\partial\Omega$ (its eigenvalues are the principal curvatures at each point, which are all non-negative).
The conclusion follows from \eqref{est_tang_grad} and \eqref{reilly_appl}.
\end{proof}

The following lemma was used in the previous proof. 

\begin{lemme}\label{lem_inradius}
Let $\Omega$ be a bounded smooth convex set in $\mathbb R^n$ with inradius $\rho$. Let $B_{\rho}$ be a ball of radius $\rho$ and center $x_0$ contained in $\Omega$. Then
$$
(x-x_0)\cdot\nu\geq\rho
$$
on $\partial\Omega$.
\end{lemme}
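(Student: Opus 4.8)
The plan is to reduce the inequality $(x-x_0)\cdot\nu\ge\rho$ at a boundary point to a statement about the supporting hyperplane of the convex set $\Omega$ at that point. Fix $x\in\partial\Omega$ and let $\nu=\nu(x)$ be the outer unit normal. Since $\Omega$ is convex (and smooth, so the normal is well-defined), the hyperplane $\Pi=\{y:\,(y-x)\cdot\nu=0\}$ is a supporting hyperplane: the whole of $\Omega$ lies in the closed half-space $\{y:\,(y-x)\cdot\nu\le 0\}$. In particular the ball $B_\rho$ of radius $\rho$ centered at $x_0$ is contained in that half-space, which says exactly that $(y-x)\cdot\nu\le 0$ for every $y$ with $|y-x_0|\le\rho$.

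Now I would extract the quantitative estimate by choosing the worst point of the ball, namely $y=x_0+\rho\nu$, which satisfies $|y-x_0|=\rho$ and hence lies in $\overline{B_\rho}\subset\overline\Omega$. Plugging this $y$ into $(y-x)\cdot\nu\le 0$ gives $(x_0+\rho\nu-x)\cdot\nu\le 0$, i.e. $\rho|\nu|^2 + (x_0-x)\cdot\nu\le 0$, i.e. $(x-x_0)\cdot\nu\ge\rho|\nu|^2=\rho$ since $\nu$ is a unit vector. This is precisely the claimed inequality, and it holds at every $x\in\partial\Omega$.

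The only point that deserves a word of care — and what I would regard as the ``main obstacle,'' though it is a mild one — is the justification that for a convex open set the tangent hyperplane at a boundary point is genuinely a supporting hyperplane, i.e. that $\Omega$ lies entirely on one side of it. For smooth $\partial\Omega$ this is standard: if some point $z\in\Omega$ had $(z-x)\cdot\nu>0$, then by convexity the segment from $z$ to $x$ lies in $\overline\Omega$, and near $x$ it would enter the outward half-space, contradicting that $\nu$ is the \emph{outer} normal (equivalently, contradicting that the signed distance function to $\partial\Omega$ has gradient $\nu$ pointing outward). Alternatively one invokes the supporting hyperplane theorem for convex sets directly. I would state this in one or two sentences rather than belabor it, then present the two-line computation above. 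Note the conclusion is in fact sharp (equality holds when $x$ is the point of $\partial\Omega$ closest to $x_0$, provided the inradius is attained at $x_0$), which is consistent with its use in Lemma~\ref{hessian_smooth}.
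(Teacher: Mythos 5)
Your proposal is correct and follows essentially the same route as the paper: both arguments rest on the fact that the tangent hyperplane at $x\in\partial\Omega$ supports the convex set, so that $B_\rho\subset\Omega$ lies in the closed half-space $\{y:(y-x)\cdot\nu\le 0\}$. The only cosmetic difference is that you extract the inequality by testing with the extremal point $x_0+\rho\nu$ of the ball, whereas the paper notes that the distance from $x_0$ to the hyperplane is $|\langle\nu(x),x-x_0\rangle|\ge\rho$ and then fixes the sign; both are two-line finishes of the same idea.
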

\begin{proof}
Up to translation we can always assume that $x_0=0$. Let $x\in\partial\Omega$ and $\nu(x)$ be the unit outer normal to $\partial\Omega$ at $x$.  Let $T$ be the supporting hyperplane of $\Omega$ at $x$, namely $T=\{y\in\mathbb R^n:\langle \nu(x),y-x\rangle=0\}$. Consider the distance of $0$ from $T$:
$$
d(0,T)=\frac{|\langle\nu(x),-x\rangle|}{|\nu(x)|}=|\langle\nu(x),x\rangle|.
$$
Let $T_+,T_-$ be the two open half-spaces defined by $T$. More precisely,
$$
T_+=\{y\in\mathbb R^n:\langle\nu(x),y-x\rangle>0\}\,,\ \ \ T_-=\{y\in\mathbb R^n:\langle\nu(x),y-x\rangle<0\}.
$$
Clearly $\Omega\subset T_-$, hence $B_{\rho}\subset\Omega\subset T_-$ and then 
$$
|\langle\nu(x),x\rangle|=d(0,T)\geq \rho.
$$
Also, $0\in T_-$, hence 
$$
\langle\nu(x),x\rangle>0.
$$
Thus, $\langle\nu(x),x\rangle=|\langle\nu(x),x\rangle|\geq \rho$. 

\end{proof}

\section{Approximation of a convex domain by smooth convex domains and  proof of Theorem~\ref{mainthm}\label{approximation}}

In this section we prove Theorem \ref{mainthm}. The argument here is essentially the one in Grisvard's book \cite[\S3]{grisvard}. We approximate a convex domain by smooth convex domains and we pass to the limit in the estimate of the $L^2$-norm of the Hessian given by Lemma~\ref{hessian_smooth}. We recall the notion of Hausdorff distance (also known as Hausdorff-Pompeiu distance) for convex domains. 

\begin{defi}\label{HPdist}
Let $\Omega_1,\Omega_2$ be two bounded convex domains in $\mathbb R^n$. The Hausdorff distance between $\Omega_1$ and $\Omega_2$ is defined as
$$
d^{\mathcal{H}}(\Omega_1,\Omega_2):=\max\{\sup_{x\in\Omega_1}d(x,\Omega_2),\sup_{y\in\Omega_2}d(y,\Omega_1)\}.
$$
Here, for $x\in\mathbb R^n$ and $A\subset\mathbb R^n$, $d(x,A):=\inf_{a\in A}|x-a|$.
\end{defi}
Note that usually the definition of the Hausdorff distance is given for closed sets, while for open sets is given in terms of their complements. However, for open convex sets, one can use directly Definition~\ref{HPdist}.

\medskip

We recall also the following approximation result from \cite[Lemma 3.2.1.1]{grisvard}.
\begin{lemme}\label{approximation_lem}
Let $\Omega$ be a bounded convex domain in $\mathbb R^n$. Then, for every $\varepsilon>0$ there exist bounded smooth convex domains $\Omega_{\varepsilon}$ such that $\Omega\subset\Omega_{\varepsilon}$ and $d^{\mathcal{H}}(\Omega,\Omega_{\varepsilon})<\varepsilon$.
\end{lemme}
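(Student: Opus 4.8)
The plan is to realize each smooth approximant as a regular sublevel set of a mollified gauge function. After a translation I may assume $0\in\Omega$, and I introduce the Minkowski gauge $j(x)=\inf\{t>0:x\in t\Omega\}$. Since $\Omega$ is bounded and convex with $0\in\Omega$, the function $j$ is convex, positively $1$-homogeneous, satisfies $c|x|\le j(x)\le C|x|$ for suitable constants $0<c\le C$, is globally Lipschitz (with constant $L\le C$), and recovers $\Omega=\{j<1\}$; by homogeneity every sublevel set is a dilate, $\{j<t\}=t\Omega$. The idea is that smoothing $j$ by convolution and taking a slightly enlarged sublevel set produces the desired $\Omega_{\varepsilon}$.

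Concretely, I fix a nonnegative mollifier $\rho_\epsilon$ supported in the ball $B_\epsilon$ with unit integral and set $j_\epsilon=j*\rho_\epsilon$. Then $j_\epsilon\in C^\infty(\mathbb R^n)$, it is convex (an average of translates of the convex function $j$, each weighted by $\rho_\epsilon\ge0$), and the Lipschitz bound gives $|j_\epsilon-j|\le L\epsilon$ everywhere. I then define $\Omega_\epsilon:=\{j_\epsilon<1+L\epsilon\}$. This set is open, convex and bounded; the inequality $j_\epsilon\le j+L\epsilon$ shows $\Omega\subset\Omega_\epsilon$, while $j\le j_\epsilon+L\epsilon$ shows $\Omega_\epsilon\subset\{j<1+2L\epsilon\}=(1+2L\epsilon)\Omega$. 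Writing $R=\sup_{x\in\Omega}|x|$ and using the sandwich $\Omega\subset\Omega_\epsilon\subset(1+2L\epsilon)\Omega$, a point $y\in\Omega_\epsilon$ can be written $y=(1+2L\epsilon)x$ with $x\in\Omega$, whence $d(y,\Omega)\le 2L\epsilon|x|\le 2LR\epsilon$; since $\Omega\subset\Omega_\epsilon$ the first term in Definition~\ref{HPdist} vanishes, so $d^{\mathcal H}(\Omega,\Omega_\epsilon)\le 2LR\epsilon$. Choosing $\epsilon<\varepsilon/(2LR)$ and relabelling then gives a body at Hausdorff distance less than $\varepsilon$.

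The step I expect to be the crux is verifying that $\Omega_\epsilon$ is genuinely \emph{smooth}, i.e.\ that $\{j_\epsilon=1+L\epsilon\}$ is a regular level set; without this one only obtains convexity and closeness. For this I would show $\nabla j_\epsilon\neq0$ there. Differentiating under the convolution, $\nabla j_\epsilon=(\nabla j)*\rho_\epsilon$, and using Euler's identity $\langle\nabla j(z),z\rangle=j(z)$ for the $1$-homogeneous function $j$ (valid a.e., where $j$ is differentiable) together with the splitting $x=(x-y)+y$, one gets $\langle\nabla j_\epsilon(x),x\rangle=j_\epsilon(x)+\int\langle\nabla j(x-y),y\rangle\rho_\epsilon(y)\,dy\ge j_\epsilon(x)-L\epsilon$. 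On the level set $\{j_\epsilon=1+L\epsilon\}$ this yields $\langle\nabla j_\epsilon(x),x\rangle\ge1$, so in particular $\nabla j_\epsilon\neq0$ (and $x\neq0$) there. By the implicit function theorem the boundary $\partial\Omega_\epsilon=\{j_\epsilon=1+L\epsilon\}$ is a $C^\infty$ hypersurface, and being the boundary of a bounded convex open set it bounds a bounded smooth convex domain, which completes the construction.
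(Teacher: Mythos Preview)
Your argument is correct. The mollified gauge $j_\epsilon$ is smooth and convex, the sandwich $\Omega\subset\{j_\epsilon<1+L\epsilon\}\subset(1+2L\epsilon)\Omega$ gives both the inclusion and the Hausdorff estimate, and your Euler-identity computation $\langle\nabla j_\epsilon(x),x\rangle\ge j_\epsilon(x)-L\epsilon=1$ on the relevant level set cleanly disposes of the regularity issue, which is indeed the only nontrivial point.

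As for comparison: the paper does not prove this lemma at all; it merely quotes it as \cite[Lemma~3.2.1.1]{grisvard}. Your construction is in fact essentially the one Grisvard uses there (regularize the gauge of the convex body and take a nearby sublevel set), so you have supplied a self-contained version of the cited result. One very minor stylistic point: you might say explicitly that $0$ lies in the \emph{interior} of $\Omega$ (automatic since $\Omega$ is open) to justify the two-sided bound $c|x|\le j(x)\le C|x|$, but nothing in the argument is missing.
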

The statement of Lemma \ref{approximation_lem} holds true also if we replace $\Omega\subset\Omega_{\varepsilon}$ by $\Omega_{\varepsilon}\subset\Omega$. We also recall the following lemma, see e.g., \cite{ADR}.
\begin{lemme}
Let $\Omega$ be a bounded convex domain in $\mathbb R^n$ and let $\{\Omega_k\}_{k=1}^{\infty}$ be a sequence of bounded smooth convex domains such that $\lim_{k\to\infty}d^{\mathcal{H}}(\Omega,\Omega_k)=0$. Then
\begin{enumerate}[i)]
\item $\lim_{k\to\infty}\rho(k)\to\rho$;
\item $\lim_{k\to\infty}D(k)=D$.
\end{enumerate}
Here $\rho(k),\rho$ and $D(k),D$ denote respectively the inradius and the diameter of $\Omega_k,\Omega$.
\end{lemme}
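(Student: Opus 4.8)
The plan is to establish the stronger fact that, on the family of bounded convex domains endowed with the Hausdorff distance $d^{\mathcal{H}}$, both the diameter and the inradius are continuous; the lemma then follows by applying this to the sequence $\{\Omega_k\}$. Put $\varepsilon_k:=d^{\mathcal{H}}(\Omega,\Omega_k)$, so $\varepsilon_k\to0$, and recall that $d^{\mathcal{H}}(A,B)\le\varepsilon$ is equivalent to the two inclusions $A\subseteq B+\overline{B_\varepsilon}$ and $B\subseteq A+\overline{B_\varepsilon}$, where $\overline{B_\varepsilon}$ denotes the closed ball of radius $\varepsilon$ centered at the origin; these inclusions remain valid with $A,B$ replaced by their closures, since $\overline{B_\varepsilon}$ is compact. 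I will use two elementary facts: (a) for a bounded convex domain the diameter is the (attained) diameter of its closure, and the inradius is the radius of a genuine inscribed ball, because $x\mapsto\dist(x,\partial\Omega)$ is continuous on the compact set $\overline{\Omega}$; (b) if $C$ is a bounded convex domain and $\overline{B_r(x)}\subseteq\overline{C}$, then $\rho(C)\ge r$ — indeed, for any $p\in C$ and $t\in[0,1)$ one has $\overline{B_{tr}((1-t)p+tx)}=(1-t)p+t\,\overline{B_r(x)}\subseteq(1-t)p+t\,\overline{C}\subseteq C$, hence $\dist((1-t)p+tx,\partial C)\ge tr$, and letting $t\to1$ gives the claim.

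For part ii) I would simply note that the diameter is $2$-Lipschitz for $d^{\mathcal{H}}$: choosing $x,y\in\overline{\Omega}$ with $|x-y|=D$ and using $\overline{\Omega}\subseteq\overline{\Omega_k}+\overline{B_{\varepsilon_k}}$ to find $x_k,y_k\in\overline{\Omega_k}$ within $\varepsilon_k$ of $x$ and $y$ respectively, one gets $D(k)\ge|x_k-y_k|\ge D-2\varepsilon_k$; the symmetric inclusion gives $D\ge D(k)-2\varepsilon_k$, so $|D(k)-D|\le2\varepsilon_k\to0$.

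For part i) the two inequalities $\limsup_k\rho(k)\le\rho$ and $\liminf_k\rho(k)\ge\rho$ must be treated separately. For the first, take a largest inscribed ball $B_{\rho(k)}(x_k)\subseteq\Omega_k$; the $\Omega_k$ all lie in a fixed ball for $k$ large, so along a subsequence realising $r_*:=\limsup_k\rho(k)$ we may assume $x_k\to x_*$ and $\rho(k)\to r_*$. Every $y\in B_{r_*}(x_*)$ lies in $B_{\rho(k)}(x_k)\subseteq\Omega_k\subseteq\Omega+\overline{B_{\varepsilon_k}}$ for large $k$, hence $\dist(y,\overline{\Omega})=0$, i.e. $y\in\overline{\Omega}$; thus $\overline{B_{r_*}(x_*)}\subseteq\overline{\Omega}$, and fact (b) with $C=\Omega$ gives $r_*\le\rho$.

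The inequality $\liminf_k\rho(k)\ge\rho$ is the crux of the argument. Let $B_\rho(x_0)\subseteq\Omega$ be a largest inscribed ball of $\Omega$; I claim $B_{\rho-\varepsilon_k}(x_0)\subseteq\overline{\Omega_k}$ for every $k$. If not, some $y$ with $|y-x_0|<\rho-\varepsilon_k$ lies outside the closed convex set $\overline{\Omega_k}$; letting $p$ be its metric projection onto $\overline{\Omega_k}$ and $e:=(y-p)/|y-p|$, the variational inequality for the projection yields $\langle y-w,e\rangle\ge|y-p|>0$ for all $w\in\overline{\Omega_k}$. Then $z:=y+\varepsilon_k e$ satisfies $|z-x_0|\le|y-x_0|+\varepsilon_k<\rho$, so $z\in B_\rho(x_0)\subseteq\Omega$, while $\langle z-w,e\rangle=\langle y-w,e\rangle+\varepsilon_k>\varepsilon_k$ for all $w\in\overline{\Omega_k}$ forces $\dist(z,\overline{\Omega_k})>\varepsilon_k$, contradicting $\Omega\subseteq\Omega_k+\overline{B_{\varepsilon_k}}$. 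This proves the claim, and then $\overline{B_r(x_0)}\subseteq\overline{\Omega_k}$ for every $r<\rho-\varepsilon_k$, so fact (b) with $C=\Omega_k$ gives $\rho(k)\ge\rho-\varepsilon_k$ and hence $\liminf_k\rho(k)\ge\rho$. Together with the previous step this yields $\rho(k)\to\rho$. I expect this separation step — converting a hypothetical point of $B_{\rho-\varepsilon_k}(x_0)\setminus\overline{\Omega_k}$ into a point of $\Omega$ lying strictly farther than $\varepsilon_k$ from $\overline{\Omega_k}$ — to be the only real obstacle; everything else is bookkeeping. (Smoothness of the $\Omega_k$ plays no role here: the statement holds for arbitrary bounded convex domains.)
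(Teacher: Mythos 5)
Your proof is correct. Note first that the paper does not prove this lemma at all: it is quoted with a pointer to the reference \cite{ADR}, so there is no ``paper proof'' to compare against, and your self-contained argument is a legitimate and welcome substitute. The two halves are sound: the $2$-Lipschitz estimate for the diameter is standard, and the delicate point --- showing $\liminf_k\rho(k)\ge\rho$ without any inclusion hypothesis --- is handled correctly by your separation argument (projection onto $\overline{\Omega_k}$, pushing the bad point $y$ by $\varepsilon_k$ along the outward direction $e$ to land in $\Omega$ at distance $>\varepsilon_k$ from $\overline{\Omega_k}$), together with the scaling fact (b) that converts $\overline{B_r(x)}\subseteq\overline C$ into $\rho(C)\ge r$ for an open convex $C$. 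Two small remarks. First, the ``equivalence'' $d^{\mathcal H}(A,B)\le\varepsilon\iff A\subseteq B+\overline{B_\varepsilon},\ B\subseteq A+\overline{B_\varepsilon}$ is not quite exact for open sets when the distance is attained only at boundary points (e.g. $A=\{1+\varepsilon\}$, $B=(0,1)$ in $\mathbb R$); the correct statement replaces $B$ by $\overline B$, which is exactly the form you actually use, so nothing breaks --- just state it with closures from the start. Second, in the situation where the lemma is applied in the paper one has $\Omega\subseteq\Omega_{k+1}\subseteq\Omega_k$, so the inequalities $\rho(k)\ge\rho$ and $D(k)\ge D$ are immediate by monotonicity and only the upper bounds need an argument; your proof covers the general (non-nested) statement as formulated in the lemma, which is strictly more than what the application requires --- and, as you note, smoothness of the $\Omega_k$ is irrelevant.
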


We are ready to prove Theorem \ref{mainthm}. 

\begin{proof}[Proof of Theorem \ref{mainthm}]

The proof is divided in three steps.

\medskip
{\bf Step 1.} Thanks to Lemma \ref{approximation_lem} we can consider a sequence of bounded smooth convex domains $\{\Omega_k\}_{k=1}^{\infty}$ such that $\Omega\subset\Omega_{k+1}\subset\Omega_k$ for all $k\in\mathbb{N}$, and such that $\lim_{k\to\infty}d^{\mathcal{H}}(\Omega_k,\Omega)=0$. Let $\sigma_j(k),\sigma_j$ denote  the Steklov eigenvalues of $\Omega_k,\Omega$ respectively. We claim that the following statements hold:
\begin{enumerate}[i)]
\item $\sigma_j(k)\to\sigma_j$ for all $j\in\mathbb N$.
\item For any $k\in\mathbb N$, let $\{u_j(k)\}_{j=1}^{\infty}$ be  a sequence of  Steklov eigenfunctions on $\Omega_k$ with traces forming a orthonormal basis of $L^2(\partial\Omega_k)$. Then there exists a sequence  $\{u_j\}_{j=1}^{\infty}$ of  Steklov eigenfunctions on $\Omega$ with traces forming a orthonormal basis of $L^2(\partial\Omega)$ such that, possibly passing to a subsequence with respect to $k$,
\begin{equation}\label{acca1}
\lim_{k\to\infty}\|u_j(k)-u_j\|_{H^1(\Omega)}=0.
\end{equation} 
\end{enumerate}
Claims i) and ii) follow directly either by \cite[Theorem 3.1 and Proposition 4.4]{ferr_lamb}  or by  \cite[Theorem 4.1]{bucgiatre}.  We note that the application of the stability results in \cite{bucgiatre,ferr_lamb} requires the condition $|\partial \Omega_k|\to |\partial \Omega|$ as $k\to \infty $, which is satisfied since the perimeter is continuous with respect to the Hausdorff distance in the class of bounded convex sets, see e.g., \cite[Theorem 2.4.10]{HP}. Note also that \cite[Lemma~3.2.3.2]{grisvard} allows to represent the domains $\Omega_k, \Omega$ in the same `atlas class' as required in \cite{ferr_lamb}. (See also Appendix  \ref{app:proof} for a self-contained proof  of claims i) and ii).)

{\bf Step 2.} Consider any domain $\omega$ with $\overline\omega \subset \Omega$, briefly  $\omega\subset\subset\Omega$. From the previous step we have $\lim_{k\to\infty}\|u_j(k)-u_j\|_{H^1(\omega)}=0$. Now, we have $\Delta(u_j(k)-u_j)=0$ in $\omega$, and by elliptic regularity (see e.g., \cite[Theorem 8.10]{GT}) we infer that $\lim_{k\to\infty}\|u_j(k)-u_j\|_{H^2(\omega)}=0$. In particular
$$
\int_{\omega}|D^2u_j|^2=\lim_{k\to\infty}\int_{\omega}|D^2u_j(k)|^2\leq \lim_{k\to\infty}2\sigma_j(k)C_{D(k),\rho(k),\sigma(k)}=2\sigma_jC_{D,\rho,\sigma_j}.
$$
Here $D(k),\rho(k)$ are the diameter and inradius of $\Omega_k$, while $D,\rho$ are the diameter and inradius of $\Omega$.

{\bf Step 3.} Consider now a sequence of domains $\omega_k\subset\subset\Omega$ such that $\omega_k\subset \omega_{k+1}$, $\cup_{k=1}^{\infty}\omega_k=\Omega$.  Then it follows by the Monotone Convergence Theorem applied to $\chi_{\omega_k}|D^2u|^2$ that
$$
\int_{\Omega}|D^2u_j|^2\leq 2\sigma_jC_{D,\rho,\sigma_j},
$$
hence $u_j\in H^2(\Omega)$. Since the traces of $\{u_j\}_{j=1}^{\infty}$ form  a complete system  in $L^2(\partial\Omega)$, all  other eigenfunctions are  linear combinations of a finite number of those eigenfunctions  hence they belong to $H^2(\Omega)$.
\end{proof}

\begin{rem}[An alternative use of the spectral stability for the proof of Theorem~\ref{mainthm}]\label{alternative}
In Step 1 of the proof of Theorem \ref{mainthm} two results are used: the convergence of the eigenvalues and the convergence of the eigenfunctions. The convergence of the eigenfunctions in Claim {\rm ii)} of Step 1, can be addressed in an alternative way. In fact, one may start by choosing a sequence $\{u_j\}_{j=1}^{\infty}$ of Steklov eigenfunctions in the reference domain $\Omega$, with traces forming a orthonormal basis of $L^2(\partial\Omega)$, and ask  if there exist sequences $\{u_j(k)\}_{j=1}^{\infty}$ for which \eqref{acca1} holds true. In general, this is not necessarily true for multiple eigenvalues, in which case the definition of {\rm generalized eigenfunctions} is required. Namely, given a finite set of Steklov eigenvalues $\sigma_j(k),...,\sigma_{j+m-1}(k)$ on $\Omega_k$ with $\sigma_j(k)\ne\sigma_{j-1}(k)$ and $\sigma_{j+m-1}(k)\ne\sigma_{j+m}(k)$, we call generalized eigenfunction (associated with $\sigma_j(k),...,\sigma_{j+m-1}(k)$) any linear combination of eigenfunctions associated with $\sigma_j(k),...,\sigma_{j+m-1}(k)$. It follows  from \cite[Theorem 3.1 and Proposition 4.4]{ferr_lamb} that, given $\sigma_j,...,\sigma_{j+m-1}$ with $\sigma_j\ne\sigma_{j-1}$ and $\sigma_{j+m-1}\ne\sigma_{j+m}$, and corresponding $L^2(\partial\Omega)$-orthonormal eigenfunctions $u_j,...,u_{j+m-1}$, for all $k\geq k_0$ (with $k_0$ large enough) there exist $m$ $L^2(\partial\Omega)$-orthonormal generalized eigenfunctions $u_j(k),...,u_{j+m-1}(k)$ such that $\lim_{k\to\infty}\|u_i(k)-u_i\|_{H^1(\Omega)}=0$ for all $i=j,...,j+m-1$. Then, using the same argument in Steps 2, 3 of the proof of Theorem~\ref{mainthm}, we conclude that all eigenfunctions $u_j$ of the chosen sequence belong to $H^2(\Omega)$.  
\end{rem}

\section{Comparison with the Dirichlet, Neumann and Robin problems}\label{app:A}

We briefly discuss here why in the case of the Dirichlet, Neumann and Robin (with positive parameter) problems, the proof of the $H^2$-regularity is easier and does not require the the Rellich-Pohozaev identity. In fact, there is no need to estimate any boundary $L^2$-norms.

\medskip

Let $\Omega$ be a bounded smooth convex domain in $\mathbb R^n$. The following estimates hold. 
\begin{itemize}
\item If $u$ is a smooth solution of $$\begin{cases}-\Delta u=\lambda u\,, & {\rm in\ }\Omega\,,\\u=0\,, & {\rm on\ }\partial\Omega,\end{cases}$$ with $\int_{\Omega}u^2=1$ (Dirichlet problem), then from Reilly's formula \eqref{reilly_for} we immediately get
$$
\int_{\Omega}|D^2u|^2\leq\lambda^2;
$$
\item If $u$ is a smooth solution of $$\begin{cases}-\Delta u=\mu u\,, & {\rm in\ }\Omega\,,\\\partial_{\nu}u=0\,, & {\rm on\ }\partial\Omega,\end{cases}$$ with $\int_{\Omega}u^2=1$ (Neumann problem), then from Reilly's formula \eqref{reilly_for} we immediately get
$$
\int_{\Omega}|D^2u|^2\leq\mu^2;
$$
\item If $u$ is a smooth solution of 
$$\begin{cases}-\Delta u=\eta u\,, & {\rm in\ }\Omega\,,\\\partial_{\nu}u+\beta u=0\,, & {\rm on\ }\partial\Omega,\end{cases}
$$
with $\int_{\Omega}u^2=1$ and $\beta>0$ (Robin problem with positive parameter), then from Reilly's formula \eqref{reilly_for} we  get
$$
\int_{\Omega}|D^2u|^2\leq\eta^2-2\int_{\partial\Omega}\Delta_{\partial\Omega}u\partial_{\nu}u=\eta^2+2\beta\int_{\partial\Omega}\Delta_{\partial\Omega}uu=\eta^2-2\beta\int_{\partial\Omega}|\nabla_{\partial\Omega}u|^2\leq\eta^2,
$$ that is
$$
\int_{\Omega}|D^2u|^2\leq\eta^2.
$$
We see  that the positive sign of $\beta$ is crucial in this last case.
\end{itemize}
In all the three cases above the bound on the $L^2$-norm of the Hessian  depends only on the eigenvalue. The eigenvalues of the three problems are continuous with respect to the Hausdorff distance in the case of a sequence of bounded convex domains monotonically converging to a given convex domain (from the exterior or the interior). The estimate then ``passes to the limit''  (this is done as in the proof of Theorem \ref{mainthm} in Section \ref{approximation}, or as in \cite[\S3]{grisvard}) and this implies the $H^2$-regularity on the limit domain.

\appendix

\section{Spectral stability of the Steklov problem}\label{app:proof}

The spectral stability of the Steklov problem under domain perturbation has been considered in the papers \cite{bucgiatre,ferr_lamb}. The results in those papers  are very general and sharp and can be applied to our problem as discussed in the proof of Theorem~\ref{mainthm} and in Remark~\ref{alternative}. 

However, since the perturbation problem considered in the present paper is rather simple, the spectral stability result used in the proof of Theorem~\ref{mainthm} can be proved with a short and self-contained argument that  we include here for the reader's convenience. The main tool that we need is the following lemma, which follows from \cite[\S2]{HP}
\begin{lemme}\label{unif_appr}
Let $\Omega$ be a bounded convex domain in $\mathbb R^n$ and let $\Omega_k$, $k\in\mathbb{N}$, be a sequence of bounded smooth convex domains, with $\Omega\subset\Omega_{k+1}\subset\Omega_k$ for all $k\in\mathbb{N}$,  such that $\lim_{k\to+\infty}d^{\mathcal{H}}(\Omega,\Omega_k)=0$. Then for any $u_k\in H^1(\mathbb R^n)$ converging weakly in $H^1(\mathbb R^n)$ to $u\in H^1(\mathbb R^n)$ we have
$$
\lim_{k\to\infty}\int_{\partial\Omega_k}u_k^2=\int_{\partial\Omega}u^2.
$$
\end{lemme}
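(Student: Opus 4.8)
The plan is to reduce everything to a uniform-in-$k$ trace inequality on the convex domains $\Omega_k$ together with the weak-$*$ convergence of their surface measures. Since $\Omega$ is a nonempty bounded convex open set, I would fix $x_0\in\Omega$ and $\rho_0>0$ with $B_{\rho_0}(x_0)\subset\Omega$, and set $D_0:=\operatorname{diam}\Omega_1$; then $B_{\rho_0}(x_0)\subset\Omega\subset\Omega_k\subset\Omega_1$ and $|x-x_0|\le D_0$ on $\overline{\Omega_1}$ for every $k$, and by the proof of Lemma~\ref{lem_inradius} applied to $\Omega_k$ one has $(x-x_0)\cdot\nu_k\ge\rho_0$ on $\partial\Omega_k$, where $\nu_k$ is the outer unit normal. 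Since $u_k\rightharpoonup u$ in $H^1(\mathbb R^n)$, the sequence is bounded in $H^1$, $\nabla u_k\rightharpoonup\nabla u$ weakly in $L^2(B_{D_0}(x_0))$, and, by the Rellich--Kondrachov theorem, $u_k\to u$ strongly in $L^2(B_{D_0}(x_0))$; this last point is the only place the weak-convergence hypothesis is used, and it is exactly what the trace inequality below will exploit.

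\emph{Step 1 (uniform trace inequality).} For any $w\in H^1(\mathbb R^n)$ one has $w^2\in W^{1,1}(\mathbb R^n)$, so applying the Gauss--Green formula to the vector field $w^2(x-x_0)$ on $\Omega_k$ and using $\operatorname{div}(x-x_0)=n$ together with $(x-x_0)\cdot\nu_k\ge\rho_0$ and $|x-x_0|\le D_0$,
\[
\rho_0\int_{\partial\Omega_k}w^2\le\int_{\partial\Omega_k}w^2\,(x-x_0)\cdot\nu_k=\int_{\Omega_k}\bigl(nw^2+2w\,(x-x_0)\cdot\nabla w\bigr),
\]
whence $\int_{\partial\Omega_k}w^2\le C\bigl(\|w\|_{L^2(B_{D_0}(x_0))}^2+\|w\|_{L^2(B_{D_0}(x_0))}\|\nabla w\|_{L^2(B_{D_0}(x_0))}\bigr)$ with $C=C(n,\rho_0,D_0)$ independent of $k$.

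\emph{Step 2 (splitting and conclusion).} Write $\int_{\partial\Omega_k}u_k^2-\int_{\partial\Omega}u^2=\int_{\partial\Omega_k}(u_k-u)(u_k+u)+\bigl(\int_{\partial\Omega_k}u^2-\int_{\partial\Omega}u^2\bigr)$. For the first term, Cauchy--Schwarz on $\partial\Omega_k$ and Step~1 applied to $u_k\mp u$ give $\bigl|\int_{\partial\Omega_k}(u_k-u)(u_k+u)\bigr|\le\|u_k-u\|_{L^2(\partial\Omega_k)}\|u_k+u\|_{L^2(\partial\Omega_k)}\to0$, since $\|u_k-u\|_{L^2(B_{D_0}(x_0))}\to0$ while $\|\nabla(u_k-u)\|_{L^2}$ and $\|u_k+u\|_{L^2(\partial\Omega_k)}$ stay bounded. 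For the second term I would approximate $u$ by $u^{(m)}\in C^\infty_c(\mathbb R^n)$ with $u^{(m)}\to u$ in $H^1(\mathbb R^n)$; Cauchy--Schwarz and Step~1 (applied to $u\mp u^{(m)}$, using $\|u-u^{(m)}\|_{H^1}\to0$) bound $\bigl|\int_{\partial\Omega_k}(u^2-(u^{(m)})^2)\bigr|$ and $\bigl|\int_{\partial\Omega}(u^2-(u^{(m)})^2)\bigr|$ by a quantity $C_m\to0$ independent of $k$, while for each fixed $m$ the function $(u^{(m)})^2$ is continuous on $\overline{\Omega_1}$ and I claim $\int_{\partial\Omega_k}g\,d\mathcal H^{n-1}\to\int_{\partial\Omega}g\,d\mathcal H^{n-1}$ for every $g\in C(\overline{\Omega_1})$; granting this, $\limsup_k\bigl|\int_{\partial\Omega_k}u^2-\int_{\partial\Omega}u^2\bigr|\le 2C_m$ for all $m$, and $m\to\infty$ finishes the proof. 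The claim amounts to $\mathcal H^{n-1}\lfloor\partial\Omega_k\rightharpoonup\mathcal H^{n-1}\lfloor\partial\Omega$ weakly-$*$, which I would obtain as follows: $\chi_{\Omega_k}\to\chi_\Omega$ in $L^1(\mathbb R^n)$ (indeed $|\Omega_k\setminus\Omega|\to0$, since $\Omega\subset\Omega_k\subset\{x:d(x,\Omega)\le\varepsilon_k\}$ with $\varepsilon_k:=d^{\mathcal H}(\Omega,\Omega_k)\to0$), so the vector measures $\nabla\chi_{\Omega_k}=-\nu_k\,\mathcal H^{n-1}\lfloor\partial\Omega_k$ converge weakly-$*$ to $\nabla\chi_\Omega=-\nu\,\mathcal H^{n-1}\lfloor\partial\Omega$, while their total masses $\mathcal H^{n-1}(\partial\Omega_k)=P(\Omega_k)$ converge to $\mathcal H^{n-1}(\partial\Omega)=P(\Omega)$ by continuity of the perimeter on convex bodies \cite[Theorem~2.4.10]{HP}; weak-$*$ convergence of vector measures together with convergence of their total masses forces weak-$*$ convergence of the associated total-variation measures, which gives the claim.

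The hard part is the convergence $\int_{\partial\Omega_k}g\to\int_{\partial\Omega}g$ for continuous $g$ in Step~2 — that is, the weak-$*$ convergence of the surface measures of the approximating convex bodies. This is where convexity is essential: it prevents cancellation in the vector measures $\nabla\chi_{\Omega_k}$, so that no mass is lost in the limit, and it is coupled with the (already known) continuity of the perimeter under Hausdorff convergence of convex sets. Everything else is the elementary trace identity of Step~1 — obtained from the vector field $x-x_0$ and the inequality $(x-x_0)\cdot\nu_k\ge\rho_0$ of Lemma~\ref{lem_inradius} — combined with routine weak/strong convergence and density arguments; in particular, the only role of the weak (rather than strong) convergence $u_k\rightharpoonup u$ is that it still yields strong $L^2$ convergence on the fixed ball $B_{D_0}(x_0)$, which Step~1 transfers to the moving boundaries.
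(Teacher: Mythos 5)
Your argument is correct, but it is a genuinely different route from the paper's: the paper does not prove Lemma~\ref{unif_appr} at all, it simply invokes \cite[\S 2]{HP}, whereas you give a self-contained proof. Your two ingredients are (a) a uniform-in-$k$ Rellich-type trace inequality obtained from the divergence theorem applied to $w^2(x-x_0)$ together with the lower bound $(x-x_0)\cdot\nu_k\ge\rho_0$ — the same geometric fact as Lemma~\ref{lem_inradius}, applied to $\Omega_k\supset B_{\rho_0}(x_0)$ — which converts the strong $L^2(B_{D_0}(x_0))$ convergence coming from Rellich--Kondrachov into smallness of $\|u_k-u\|_{L^2(\partial\Omega_k)}$ and handles the density argument uniformly in $k$; and (b) the weak-$*$ convergence of the surface measures $\mathcal H^{n-1}\lfloor\partial\Omega_k\rightharpoonup\mathcal H^{n-1}\lfloor\partial\Omega$, which you deduce from $\chi_{\Omega_k}\to\chi_\Omega$ in $L^1$, the identification $|D\chi_{\Omega_k}|=\mathcal H^{n-1}\lfloor\partial\Omega_k$, the continuity of the perimeter on convex bodies (\cite[Theorem~2.4.10]{HP}, the very fact the paper already uses in Step~1 of the proof of Theorem~\ref{mainthm}), and the standard fact that weak-$*$ convergence of the vector measures plus convergence of total masses upgrades to weak-$*$ convergence of the total variations (Reshetnyak continuity). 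This buys a proof that only uses BV folklore plus the one convex-geometry fact already cited elsewhere in the paper, at the price of being longer than the paper's one-line reference; it also nicely isolates where convexity enters (the uniform inradius bound and the no-cancellation/perimeter-continuity step). Two small points to tighten: in Step~2 you invoke ``Step~1'' also for the integral over $\partial\Omega$, but Step~1 was stated only for the smooth $\Omega_k$ — you should either note that the same Gauss--Green argument works on the fixed convex (hence Lipschitz) domain $\Omega$, whose outer normal exists $\mathcal H^{n-1}$-a.e. and satisfies $(x-x_0)\cdot\nu\ge\rho_0$, or simply use the standard trace theorem on the fixed Lipschitz domain $\Omega$; and in the measure-theoretic claim the test function $g=(u^{(m)})^2$ should be extended from $\overline{\Omega_1}$ to an element of $C_c(\mathbb R^n)$ (Tietze), so that weak-$*$ convergence with converging total masses applies. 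Both are routine and do not affect the validity of the proof.
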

The spectral stability result is stated as follows.

\begin{thm}\label{stability_easy}
Let $\Omega$ be a bounded convex domain in $\mathbb R^n$ and let $\Omega_k$, $k\in\mathbb{N}$, be a sequence of bounded smooth convex domains, with $\Omega\subset\Omega_{k+1}\subset\Omega_k$ for all $k\in\mathbb{N}$,  such that $\lim_{k\to+\infty}d^{\mathcal{H}}(\Omega,\Omega_k)=0$. Let $\sigma_j(k),\, \sigma_j$ denote  the Steklov eigenvalues of $\Omega_k,\, \Omega$ respectively. Then
\begin{equation}\label{limh1}
\lim_{k\to \infty}\sigma_j(k)=\sigma_j,
\end{equation}
for all $j\in\mathbb N$. 
Moreover, 
let $\{u_j(k)\}_{j=1}^{\infty}$ be a sequence of Steklov eigenfunctions on $\Omega_k$ with traces forming an orthonormal basis of $L^2(\partial\Omega_k)$. Then,   there exists a sequence of  Steklov eigenfunctions on $\Omega$, denoted by $\{u_j\}_{j=1}^\infty$ and associated with $\sigma_j$, with traces forming an orthonormal basis of $L^2(\partial\Omega)$, such that
possibly passing to a subsequence with respect to $k$, we have
\begin{equation}\label{limh2}
\lim_{k\to\infty}\|u_j(k)-u_j\|_{H^1(\Omega)}=0.
\end{equation}
\end{thm}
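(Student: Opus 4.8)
The plan is to run the standard variational/min-max argument for spectral stability, exploiting heavily the fact that the domains are \emph{nested and shrinking} to $\Omega$, which makes restriction operators and extension operators easy to handle. I would organize the argument around two variational formulations living on a common function space.

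\medskip

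\textbf{Step 1: A common functional setting.} Since $\Omega\subset\Omega_{k+1}\subset\Omega_k$ and $d^{\mathcal H}(\Omega,\Omega_k)\to 0$, fix a large ball $B$ containing $\Omega_1$. I would work with functions in $H^1(B)$ (or use a bounded extension operator $E_k\colon H^1(\Omega_k)\to H^1(\mathbb R^n)$; for convex domains the extension norms can be controlled uniformly). The Steklov eigenvalues of $\Omega_k$ are characterized by the min-max
$$
\sigma_j(k)=\min_{\substack{V\subset H^1(\Omega_k)\\ \dim V=j}}\ \max_{0\ne v\in V}\ \frac{\int_{\Omega_k}|\nabla v|^2}{\int_{\partial\Omega_k}v^2},
$$
and similarly for $\Omega$ (the Rayleigh quotient is well-defined on the subspace where the denominator is nonzero; the bottom eigenvalue $\sigma_1=0$ corresponds to constants, which I would treat separately or include by the convention that $0/(\text{positive})=0$).

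\medskip

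\textbf{Step 2: Upper bound} $\limsup_k \sigma_j(k)\le\sigma_j$. Take a $j$-dimensional space $V$ spanned by the first $j$ Steklov eigenfunctions $u_1,\dots,u_j$ on $\Omega$, extend them to $H^1(\mathbb R^n)$ by a fixed bounded extension operator $E\colon H^1(\Omega)\to H^1(\mathbb R^n)$, and restrict to $\Omega_k$. Then $\int_{\Omega_k}|\nabla Eu|^2\to\int_\Omega|\nabla u|^2$ (since $|\Omega_k\setminus\Omega|\to 0$ by Hausdorff convergence of convex sets), and $\int_{\partial\Omega_k}(Eu)^2\to\int_{\partial\Omega}u^2$ by Lemma~\ref{unif_appr} applied to the constant sequence $u_k=Eu$. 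Hence the Rayleigh quotients on $\Omega_k$ of the (linearly independent, for $k$ large) functions $Eu_1|_{\Omega_k},\dots,Eu_j|_{\Omega_k}$ converge to those on $\Omega$, giving $\limsup_k\sigma_j(k)\le\sigma_j$ after a short argument controlling the max over the finite-dimensional space $V$.

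\medskip

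\textbf{Step 3: Lower bound and convergence of eigenfunctions.} This is the step I expect to be the main obstacle, because it requires compactness. Normalize the eigenfunctions $u_j(k)$ on $\Omega_k$ by $\int_{\partial\Omega_k}u_j(k)^2=1$ (already assumed) and, using $\sigma_j(k)\le\sigma_j+o(1)$ from Step 2, deduce $\int_{\Omega_k}|\nabla u_j(k)|^2=\sigma_j(k)\le C$. Combined with a uniform Poincaré-type inequality on the $\Omega_k$ (available since they are uniformly bounded convex sets with a uniform lower bound on $|\partial\Omega_k|$, the latter from continuity of perimeter), one gets a uniform $H^1$ bound; extending to $H^1(\mathbb R^n)$ with uniformly bounded extension operators, pass to a weakly convergent subsequence $Eu_j(k)\rightharpoonup u_j$ in $H^1(\mathbb R^n)$. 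Lemma~\ref{unif_appr} gives $\int_{\partial\Omega}u_j^2=\lim_k\int_{\partial\Omega_k}u_j(k)^2=1$, and weak lower semicontinuity of the Dirichlet integral plus $|\Omega_k\setminus\Omega|\to 0$ gives $\int_\Omega|\nabla u_j|^2\le\liminf_k\sigma_j(k)$. One then checks that the weak limit $u_j$ is a weak solution of the Steklov problem on $\Omega$ with eigenvalue $\liminf_k\sigma_j(k)$: pass to the limit in the weak formulation $\int_{\Omega_k}\nabla u_j(k)\cdot\nabla\phi=\sigma_j(k)\int_{\partial\Omega_k}u_j(k)\phi$ for test functions $\phi$ that are restrictions of a fixed $\psi\in C^\infty_c(\mathbb R^n)$ (the gradient term converges because $\chi_{\Omega_k}\to\chi_\Omega$ in $L^1$ and $\nabla Eu_j(k)\rightharpoonup\nabla u_j$ in $L^2$; the boundary term by Lemma~\ref{unif_appr} applied to the product, which one reduces to the quadratic case via polarization), then use density. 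An induction on $j$ together with the orthogonality relations (which survive in the limit, again by Lemma~\ref{unif_appr} applied to $u_i(k)+u_\ell(k)$) shows the limits $u_1,u_2,\dots$ are Steklov eigenfunctions on $\Omega$ with $L^2(\partial\Omega)$-orthonormal traces, and that $\liminf_k\sigma_j(k)\ge\sigma_j$; combined with Step 2 this yields \eqref{limh1}. Finally, strong $H^1$ convergence \eqref{limh2} follows from weak convergence plus convergence of norms: $\int_\Omega|\nabla u_j(k)|^2\to\int_\Omega|\nabla u_j|^2$ (upper bound from $\sigma_j(k)\to\sigma_j$, lower bound from weak lsc), and $\int_\Omega u_j(k)^2\to\int_\Omega u_j^2$ by Rellich compactness on the fixed ball $B$, so $\|u_j(k)-u_j\|_{H^1(\Omega)}\to 0$.

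\medskip

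\textbf{Main obstacle.} The delicate points are: (i) obtaining uniform extension operators and a uniform Poincaré/trace inequality on the shrinking convex domains $\Omega_k$ — here the convexity and the convergence $|\partial\Omega_k|\to|\partial\Omega|>0$ are essential; and (ii) justifying the passage to the limit in the boundary integrals, for which Lemma~\ref{unif_appr} is precisely the tool provided, but one must apply it not only to $u_j(k)$ but to sums $u_i(k)+u_\ell(k)$ and to $u_j(k)+t\,\psi|_{\partial\Omega_k}$ to recover bilinear limits by polarization. Everything else is the routine min-max/compactness machinery.
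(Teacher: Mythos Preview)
Your proposal is correct and follows the standard variational/compactness route to spectral stability, but it differs from the paper's proof in one essential point: how strong $H^1(\Omega)$-convergence of the eigenfunctions is obtained. The paper exploits the $H^2$-estimate already proved for smooth convex domains (Lemma~\ref{hessian_smooth}) to bound $\{u_j(k)\}_k$ uniformly in $H^2(\Omega)$, so Rellich--Kondrachov yields a strongly $H^1(\Omega)$-convergent subsequence in one stroke; it then also uses this $H^2$ bound (via Sobolev embedding into $L^p$, $p>2$) to show $\int_{\Omega_k\setminus\Omega}\nabla u_i(k)\cdot\nabla u_j(k)\to 0$ and recover orthogonality. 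You instead stay entirely at the $H^1$ level: weak $H^1$-compactness via uniformly bounded extension operators, then the ``weak convergence plus convergence of norms'' trick, using $\int_\Omega|\nabla u_j(k)|^2\le\sigma_j(k)\to\sigma_j=\int_\Omega|\nabla u_j|^2$; orthogonality you would get from Lemma~\ref{unif_appr} and polarization on the boundary. Your route is more self-contained---it makes the spectral stability result independent of the Hessian estimate and hence of the Rellich--Pohozaev identity---at the price of a longer argument (uniform extension/Poincar\'e constants, an induction on $j$ to identify the limiting eigenvalue before the norm-convergence step can close, and the polarization device to pass bilinear boundary terms through Lemma~\ref{unif_appr}). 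The paper's route is shorter precisely because Lemma~\ref{hessian_smooth} is already in hand and is, after all, the centerpiece of the article.
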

\begin{proof}
The proof is divided in three steps.

\smallskip

{\bf Step 1.} We fix $j\in \mathbb{N}$ and we prove that $\sigma_j(k)\to\sigma_j$ as $k\to\infty$. Let $v_i$, $i=1,...,j$, be a $L^2(\partial\Omega)$ orthonormal family of Steklov eigenfunctions in $\Omega$, with associated eigenvalues $\sigma_i$. 
Let $\tilde v_i$ be some extension of $v_i$ to $\mathbb R^n$, and let $\tilde V_j=\{\sum_{i=1}^ja_i\tilde v_i:\sum_{i=1}^ja_i^2=1, \ a_i\in \mathbb{R} \}$. We have
$$
\sigma_j(k)\leq\max_{\tilde v\in \tilde V_j}\frac{\int_{\Omega_k}|\nabla \tilde v|^2}{\int_{\partial\Omega_k}\tilde v^2}=\max_{\tilde v\in \tilde V_j}\frac{\int_{\Omega}|\nabla \tilde v|^2+\int_{\Omega_k\setminus\Omega}|\nabla \tilde v|^2}{\int_{\partial\Omega}\tilde v^2}\frac{\int_{\partial\Omega}\tilde v^2}{\int_{\partial\Omega_k}\tilde v^2}\leq(1+\varepsilon'(j,k))\left(\sigma_j+\max_{\tilde v\in\tilde V_j}\frac{\int_{\Omega_k\setminus\Omega}|\nabla \tilde v|^2}{\int_{\partial\Omega}\tilde v^2}\right)
$$
where $\varepsilon'(j,k)\to 0$ as $k\to\infty$. Now, for any $\tilde v\in\tilde V_j$, by the absolute continuity of the Lebesgue integral, we have $\int_{\Omega_k\setminus\Omega}|\nabla\tilde v|^2\to 0$ as $k\to \infty$ and since $\tilde V_j$ is finite dimensional, we get altogether
\begin{equation}\label{EE}
\sigma_j(k)\leq(1+\varepsilon'(j,k))(\sigma_j+\varepsilon''(j,k))
\end{equation}
where $\varepsilon''(j,k)\to 0$ as $k\to\infty$ for fixed $j$. 
Now, consider the restrictions of $u_i(k)$ to $\Omega$. Note that by inequality \eqref{EE} and the normalization of the eigenfunctions, the norm $\|u_i(k)\|_{H^1(\Omega_k)}$ is uniformly bounded with respect to $k$ hence Lemma  \ref{unif_appr} is applicable. Since $\int_{\partial\Omega_k}u_i(k)u_j(k)=\delta_{ij}$, from Lemma \ref{unif_appr} we deduce that for $k\geq k_0$, $u_i(k)$ are linearly independent also in $L^2(\partial\Omega)$. Let $V_j(k)=\{\sum_{i=1}^ja_iu_i(k):\sum_{i=1}^ja_i^2=1, \ a_i\in\mathbb R\}$. We have
$$
\sigma_j\leq\max_{v\in V_j(k)}\frac{\int_{\Omega}|\nabla v|^2}{\int_{\partial\Omega}v^2}\leq\max_{v\in V_j(k)}\frac{\int_{\Omega_k}|\nabla v|^2}{\int_{\partial\Omega_k}v^2}\frac{\int_{\partial\Omega_k}v^2}{\int_{\partial\Omega}v^2}\leq\sigma_j(k)(1+\varepsilon(j,k))
$$
with $\varepsilon(j,k)\to 0$ as $k\to\infty$ for fixed $j$ by Lemma \ref{unif_appr}. 
This combined with \eqref{EE} implies that $\sigma_j(k)\to\sigma_j$ for all $j$ as $k\to\infty$.

\smallskip

{\bf Step 2.} By Lemma~\ref{hessian_smooth} and the  uniform bounds for the norms in $H^1(\Omega_k)$ of the eigenfunctions (see Step 1),  we have that $\{u_j(k)\}_{k=1}^{\infty}$ is bounded in $H^2(\Omega)$. Up to extracting a subsequence, we find $u_j\in H^2(\Omega)$ such that $u_j(k)\to u_j$ in $H^1(\Omega)$. We now show that $u_j$ is an eigenfunction with eigenvalue $\sigma_j$. Let $\phi\in H^1(\Omega)$ and let $\Phi$ be an extension to $\Omega_1$ (or to a fixed box containing all $\Omega_k$). Then
\begin{equation}\label{weak_lim}
\int_{\Omega_k}\nabla u_j(k)\cdot\nabla\Phi=\sigma_j(k)\int_{\partial\Omega_k}u_j(k)\Phi.
\end{equation}
We consider the integral in  the right-hand side of \eqref{weak_lim}, and  we write
$$
\int_{\partial\Omega_{k}}u_j(k)\Phi=\int_{\partial\Omega}u_j\Phi+\left(\int_{\partial\Omega_{k}}u_j(k)\Phi-\int_{\partial\Omega}u_j(k)\Phi\right)+\left(\int_{\partial\Omega}u_j(k)\Phi-\int_{\partial\Omega}u_j\Phi\right).
$$
The second term in the right-hand side goes to zero as $k\to\infty$ thanks to Lemma \ref{unif_appr}, while the third term goes to zero from the compactness of the trace operator. For the left-hand side of \eqref{weak_lim}, we have
\begin{equation}\label{last}
\int_{\Omega_k}\nabla u_j(k)\cdot\nabla\Phi=\int_{\Omega}\nabla u_j\cdot\nabla\Phi+\int_{\Omega_k\setminus\Omega}\nabla u_j(k)\cdot\nabla\Phi+\left(\int_{\Omega}\nabla (u_j(k)-u_j)\cdot\nabla\Phi\right).
\end{equation}
The second term in the right-hand side of \eqref{last} goes to zero as $k\to\infty$ because 
$$
\int_{\Omega_k\setminus\Omega}\nabla u_j(k)\cdot\nabla\Phi\leq\|\nabla u_j(k)\|_{L^2(\Omega_k\setminus\Omega)}\|\nabla \Phi\|_{L^2(\Omega_k\setminus\Omega)}\leq (\sigma_j(k))^{1/2}\|\nabla \Phi\|_{L^2(\Omega_k\setminus\Omega)}
$$
and because $\|\nabla \Phi\|_{L^2(\Omega_k\setminus\Omega)}$ goes to zero as before by the absolute continuity of the Lebesgue integral. The third term in \eqref{last} goes to zero since $u_j(k)\to u_j$ in $H^1(\Omega)$.
In conclusion, 
\begin{equation}
    \int_{\Omega}\nabla u_j\cdot \nabla \phi =\sigma_j \int_{\partial\Omega}u_j\phi
\end{equation}
for all $\phi\in H^1(\Omega)$, and 
 \eqref{limh2} holds.

\smallskip

{\bf Step 3.} 
It remains to prove that $u_j\ne 0$ for all $j\in\mathbb{N}$ and that  the traces of $\{u_j\}_{j=1}^{\infty}$ form an orthonormal basis of $L^2(\partial\Omega)$. This follows simply by passing to the limit in the equality 
$
\int_{\Omega_k}\nabla u_i(k)\cdot \nabla u_j(k)=\sigma_j(k)\delta_{ij}
$
as $k\to \infty$
in order to get 
$
\int_{\Omega}\nabla u_i\cdot \nabla u_j=\sigma_j\delta_{ij}.
$
In fact, writing 
$$\int_{\Omega_k}\nabla u_i(k)\cdot \nabla u_j(k)=\int_{ \Omega}\nabla u_i(k)\cdot \nabla u_j(k)+\int_{\Omega_k\setminus \Omega}\nabla u_i(k)\cdot \nabla u_j(k)
$$
we have that the first integral in  the right-hand side of the previous equality converges to 
$
\int_{\partial \Omega}\nabla u_i\cdot \nabla u_j
$
while the second integral converges to zero as $k\to \infty$. Indeed, by the H\"{o}lder's inequality, the Sobolev Embedding Theorem and the uniform bound on the norms of the eigenfunctions in $H^2(\Omega_k)$ we get
\begin{multline}\left|
\int_{\Omega_k\setminus \Omega}\nabla u_i(k)\cdot \nabla u_j(k)\right|\le 
\| \nabla u_i(k)\|_{L^2(\Omega_k\setminus \Omega)}\| \nabla u_j(k)\|_{L^2(\Omega_k\setminus \Omega)}\\
\le |\Omega_k\setminus\Omega|^{1-2/p}\| \nabla u_i(k)\|_{L^p(\Omega_k\setminus \Omega)}\| \nabla u_j(k)\|_{L^p(\Omega_k\setminus \Omega)}\le C |\Omega_k\setminus\Omega|^{1-2/p}
\end{multline}
for some $p>2$, where $C$ is independent of $k$ (note that the constants arising from the Sobolev Embedding Theorem  do not depend on $k$ since \cite[Lemma~3.2.3.2]{grisvard} allows to represent the domains $\Omega_k$ in the same `atlas class').

\end{proof}





\section*{Acknowledgments}
The first author is a member of the Gruppo Nazionale per l'Analisi  Matematica, la Probabilit\`{a} e le loro Applicazioni (GNAMPA) of the Istituto Nazionale di Alta Matematica (INdAM). The second author acknowledges the support of the INdAM GNSAGA. The authors aknowledge financial support from the project ``Perturbation problems and asymptotics for elliptic differential equations: variational and potential theoretic methods'' funded by the European Union – Next Generation EU and by MUR-PRIN-2022SENJZ3. 

\bibliography{bibliography}{}
\bibliographystyle{abbrv}
\end{document}